\documentclass{amsart}
\usepackage{amsfonts,amssymb,amsmath,amsthm}
\usepackage{url}
\usepackage{enumerate}
\usepackage{mathrsfs}

\urlstyle{sf}
\newtheorem{thrm}{Theorem}[section]
\newtheorem{lem}[thrm]{Lemma}
\newtheorem{prop}[thrm]{Proposition}

\theoremstyle{definition}

\newtheorem{remark}[thrm]{Remark}
\numberwithin{equation}{section}

\author{L. Deleaval}
\address{Institut Math\'ematiques de Jussieu\\ Universit\'e Paris VI\\
France}
\email{deleaval@math.jussieu.fr}
\author{N. Demni}
\address{Institut de Recherche en Math\'ematiques de Rennes\\ Universit\'e Rennes 1\\
France}
\email{nizar.demni@univ-rennes1.fr}


\keywords{Conditional independence, normal distribution, Bessel-type hypergeometric functions, product formulas}
\subjclass{Primary 42A61, Secondary 33C70}
\begin{document}

\title[Product Formula]{A probabilistic proof of product formulas for spherical Bessel functions and their matrix analogues}

\begin{abstract}
We write, for geometric index values, a probabilistic proof of the product formula for spherical Bessel functions. Our proof has the merit to carry over without any further effort to Bessel-type hypergeometric functions of one matrix argument. Moreover, the representative probability distribution involved in the matrix setting is shown to be closely related to matrix-variate normal distributions and to the symmetrization of upper-left corners of Haar-distributed orthogonal matrices. Once we did, we use the latter relation to perform a detailed analysis of  this probability distribution. In case it is absolutely continuous with respect to Lebesgue measure on the space of real symmetric matrices, the product formula for Bessel-type hypergeometric functions of two matrix arguments is obtained from Weyl integration formula. 
\end{abstract}
\maketitle

\section{Reminder and motivation}
The spherical Bessel function $j_{\nu}$ of index $\nu$ is defined for all complex $z$ and all $\nu > -1$ by (\cite{Watson}) 
\begin{equation*}
j_{\nu}(z) = \sum_{l=0}^{+\infty} \frac{(-1)^l}{(\nu+1)_l l!} \left(\frac{z}{2}\right)^{2l},
\end{equation*}
where $(\nu+1)_{l}: = \Gamma(\nu+l+1)/\Gamma(\nu+1)$ denotes the usual Pochhammer symbol. It provides a basic example of one-variable special function satisfying a product formula that opened the way to a rich harmonic analysis. More precisely, for $\nu \geq -1/2$ and nonnegative real numbers $x,y,z$, it is well known that
\begin{equation}\label{PF}
j_{\nu}(xy)j_{\nu}(zy) = \int_{\mathbb R_+} j_{\nu}(\xi y) \tau_{x,z}^{\nu}(d\xi),  
\end{equation}
where $\tau_{x,z}^{\nu}$ is a compactly-supported probability distribution. Recall that for $\nu > -1/2$, \eqref{PF} is a trivial consequence of the addition Theorem for Bessel functions (see for instance Chapter XI in \cite{Watson}) while it obviously holds for $\nu = -1/2$ since $j_{-1/2}(z) = \cos(z)$. Nevertheless, for integer $p \geq 1$ and for the so-called geometrical index values $\nu = (p/2) -1$, \eqref{PF} may be derived from the following Poisson-type integral representation 
\begin{equation}
j_{(p/2)-1}(|v|) = \int_{S^{p-1}} e^{i\langle v, s \rangle}\sigma_1(ds), \quad  v\in \mathbb R^p,  
\end{equation}
where $\sigma_1$ is the uniform distribution on the unit sphere $S^{p-1}$ and $\mathopen\langle\cdot,\cdot\mathclose\rangle$, $\mathopen|\cdot\mathclose|$ are  respectively the Euclidean inner product and the associated Euclidean norm in $\mathbb{R}^p$. Indeed, if we set $|v| = y$, then 
\begin{align*}
j_{(p/2)-1}(x|v|) j_{(p/2)-1}(z|v|) = \int_{\mathbb{R}^p} e^{i \langle v, s \rangle} (\sigma_x \star \sigma_z)(ds),
\end{align*} 
where $\sigma_x,\sigma_z$ are the uniform distributions on spheres of radii $x,z$ respectively. But according to \cite{Rag} Corollary 5.2 p.1149, the probability distribution $\sigma_x \star \sigma_z$ is absolutely continuous with respect to the Lebesgue measure in $\mathbb{R}^p$ and due to its rotational invariance it has a radial density. The use of spherical coordinates yields then \eqref{PF}. Avoiding techniques from differential geometry like the ones used to prove the absolute continuity of $\sigma_x \star \sigma_z$, we write a probabilistic proof of \eqref{PF} for geometric index values and supply a probabilistic interpretation of $\tau_{x,z}^{(p/2)-1}$. Our starting point is the elementary fact that the conditional distribution of a standard normal vector $N$ in $\mathbb{R}^p$ given its radius $|N|$ is the uniform distribution on the sphere of radius $|N|$. The product of two spherical Bessel functions turns towards the conditional independence of two independent standard normal vectors $N_1,N_2$ relative to the $\sigma$-field generated by their radii $|N_1|, |N_2|$ (\cite{Rev}). The representative probability distribution $\tau_{x,z}^{(p/2)-1}$ is then seen to be the conditional distribution of the radial part $|N_1+N_2|$ given $(|N_1| = x, |N_2| = z)$. In fact, $N_1+N_2$ is again distributed as a standard Gaussian vector (up to a constant) and its angular part is independent from both radii $|N_1|$ and $|N_2|$. The reader will easily realize from the ingredients needed in the proof that choosing any multivariate stable distribution in $\mathbb{R}^p$ whose density is a radial function does not alter our proof.  But the Fourier transform of a radial function is again radial therefore the choice restricts uniquely to isotropic or rotationally invariant stable distributions (whose L\'evy exponents are given up to a constant by $v \mapsto |v|^{\alpha}, \alpha \in (0,2]$, \cite{Sato} p.86). 

Our proof has also the merit to carry over after mild modifications to some matrix analogues of spherical Bessel functions. Those we consider here are known as Bessel-type hypergeometric functions of one and two $m \times m$ real symmetric matrix arguments. The product formulas we obtain are valid for geometrical index values and are those derived in \cite{Ros} using hypergroup theory, in the particular case of the real division algebra. This is by no means a loss of generality since product formulas over the division algebra $\mathbb{C}$ may be easily derived along the same lines. For functions of  one matrix argument, the proof is identical to that written for $j_{(p/2)-1}$. Besides, the representative probability distribution is seen to be the conditional distribution of the radial part of the sum of two independent $p \times m \, (p \geq m)$ standard matrix-variate normal distributions given the radial part of each. We shall prove that this conditional distribution is closely related to the distribution of the $m \times m$ upper-left corner of an orthogonal matrix of size $p$, whence its absolute continuity (with respect to Lebesgue measure) is deduced for $p \geq m+1$. For these values of $p$, one easily derives the product formula for functions of two arguments using Weyl integration formula for the space of real symmetric matrices. As a matter of fact, the corresponding representative probability distribution has an analogous description in terms of singular values rather than matrices. Besides, when $p \geq 2m$, a result due to B. Collins provides a detailed description of the distribution of the upper-left corner of an orthogonal matrix, agreeing with the variable change formula given in Lemma 3.7 p.495 in \cite{Herz} and reproved in Corollary 3.3 p. 762 in \cite{Ros}. Note finally that since Bessel-type hypergeometric functions of two matrix arguments we consider here are instances of generalized Bessel functions associated with $B$-type root systems, then our approach resembles the one carried for proving Theorem 5.16 (ii) in \cite{BBO}.  

The paper is organized as follows. In the next section, we consider spherical Bessel functions $j_{(p/2)-1}$ and prove \eqref{PF} for geometric index values. In section 3, we extend our proof to Bessel-type hypergeometric functions of one real symmetric matrix argument. In the last section, we perform a detailed analysis of the representative probability distribution: it is absolutely continuous for $p \geq m+1$ and its density enjoys a certain averaged bi-invariance property with respect to the orthogonal group. The product formula for functions of two real symmetric matrix arguments follows from Weyl integration formula.  

\section{Product formula for spherical Bessel functions}
All random variables occuring below are defined on some probability space $(\Omega, \mathscr{F}, \mathbb{P})$ and we denote $\mathbb{E}$ the corresponding expectation. Furthermore, for the $\sigma$-field $\sigma(X)$ generated by a random variable $X$, we write 
\begin{equation*}
\mathbb{E}[\mathopen \cdot \mathclose|X] \quad \textrm{for} \quad \mathbb{E}[\mathopen \cdot \mathclose|\sigma(X)],
\end{equation*}
and we recall that all equalities involving conditional expectations hold $\mathbb{P}$-almost surely. Let $N$ be a standard normal vector\footnote{Its coordinates are independent centered normal distributions with unit variance.} in $\mathbb{R}^p$ and let $N = R\Theta$ be its polar decomposition ($R > 0$ and  $\Theta \in S^{p-1}$). Then, $R$ and $\Theta$ are independent and $\Theta$ is uniformly distributed on $S^{p-1}$. It follows that for any $v \in \mathbb{R}^p$
\begin{equation*}
\mathbb{E}\left[e^{i\langle v, N\rangle}|R\right] = \int_{S^{p-1}}e^{i\langle v, Rs\rangle}\sigma_1(ds) = j_{(p/2)-1}(|v|R). 
\end{equation*}
In fact, if $X,Y$ are independent random variables valued in some measurable spaces and if $\mathcal{D}_Y$ stands for the distribution of $Y$, then 
\begin{equation*}
\mathbb{E}[f(X,Y)| X] = \int f(X,y) \mathcal{D}_Y(dy)
\end{equation*}
for any bounded Borel function $f$ (see \cite{Rev} p.108 Exercice 4.27). 

Now, let $N_1, N_2$ be two independent standard normal vectors in $\mathbb{R}^p$ with polar decompositions $N_1 =  R_1\Theta_1, N_2 = R_2\Theta_2$ respectively, and consider the product $\sigma$-field $\sigma(R_1,R_2)$ generated by $R_1,R_2$. Then, the independence of $N_1$ and $N_2$ implies that (\cite{Rev}) 
\begin{eqnarray*}
\mathbb{E}\left[e^{i\langle v, N_1\rangle}|R_1\right] &=& \mathbb{E}\left[e^{i\langle v, N_1\rangle}| R_1,R_2\right]\\
\mathbb{E}\left[e^{i\langle v, N_2\rangle}|R_2\right] &=& \mathbb{E}\left[e^{i\langle v, N_2\rangle}|R_1,R_2\right].
\end{eqnarray*}
Besides, $N_1,N_2$ are conditionally independent relative to $\sigma(R_1,R_2)$ (see \cite{Rev} p.109 Exercice 4.32). In fact, one has for any bounded Borel function $f: \mathbb{R}^p \to \mathbb{R}$ 
\begin{align*}
\mathbb{E}\Bigl[f(N_2)| N_1, R_1,R_2\Bigr]  = \mathbb{E}\Bigl[f(N_2)|R_2\Bigr]  =  \mathbb{E}\Bigl[f(N_2)|R_1,R_2\Bigr].
\end{align*}
Thus
\begin{equation*}
\mathbb{E}\left[e^{i\langle v, N_1\rangle}|R_1\right] \mathbb{E}\left[e^{i\langle v, N_2\rangle}|R_2\right] = \mathbb{E}\left[e^{i\langle v, N_1+N_2\rangle}|R_1,R_2\right].
\end{equation*}
Write $N_1+N_2: = R_3\Theta_3$, then $N_1+N_2$ is (up to a constant factor) a standard normal vector so that $\Theta_3$ is uniformly distributed on $S^{p-1}$ and is independent from $R_3$. 
We claim that: 

\begin{prop}\label{L1}
$\Theta_3$ is independent from $\sigma(R_1,R_2)$.
\end{prop}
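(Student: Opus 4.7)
The plan is to establish the stronger statement that the conditional distribution of $\Theta_3$ given $\sigma(R_1,R_2)$ equals the uniform measure $\sigma_1$ on $S^{p-1}$, irrespective of the values of $R_1,R_2$. Since this conditional law will then not depend on $(R_1,R_2)$, the desired independence follows at once.

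The first step is to record the joint structure provided by the two polar decompositions $N_1=R_1\Theta_1$ and $N_2=R_2\Theta_2$: the four variables $R_1,R_2,\Theta_1,\Theta_2$ are mutually independent, with each $\Theta_i$ uniform on $S^{p-1}$. In particular $(\Theta_1,\Theta_2)$ is independent of $\sigma(R_1,R_2)$, so that conditionally on $(R_1,R_2)=(r_1,r_2)$ the pair $(\Theta_1,\Theta_2)$ retains its unconditional product uniform law, and
\[
\Theta_3 \;=\; \frac{r_1\Theta_1+r_2\Theta_2}{|r_1\Theta_1+r_2\Theta_2|}
\]
is almost surely a well-defined Borel function of $(\Theta_1,\Theta_2)$ alone.

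The second step is to invoke rotational invariance: for every orthogonal matrix $O \in O(p)$, the pair $(O\Theta_1,O\Theta_2)$ has the same law as $(\Theta_1,\Theta_2)$, and pushing this forward through the displayed formula gives $O\Theta_3 \stackrel{d}{=} \Theta_3$ under the conditional law given $(R_1,R_2)=(r_1,r_2)$. Thus this conditional law is $O(p)$-invariant on $S^{p-1}$; since $\sigma_1$ is the unique $O(p)$-invariant probability measure on the sphere, it must coincide with $\sigma_1$, with no dependence on $(r_1,r_2)$.

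I do not expect a serious obstacle. The only technicality worth isolating is the passage from the unconditional invariance of $(\Theta_1,\Theta_2)$ to the invariance of the conditional law of $\Theta_3$, but this is immediate from the independence of $(\Theta_1,\Theta_2)$ and $(R_1,R_2)$ established in the first step; and the exceptional event $\{r_1\Theta_1+r_2\Theta_2=0\}$ is negligible since $(R_1,R_2)$ has a density on $(0,\infty)^2$ and $(\Theta_1,\Theta_2)$ is absolutely continuous on $S^{p-1} \times S^{p-1}$.
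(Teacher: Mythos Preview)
Your proof is correct and follows essentially the same route as the paper: both identify the conditional law of $\Theta_3$ given $(R_1,R_2)=(r_1,r_2)$ with the pushforward $\nu_{r_1,r_2}$ of $\sigma_1\otimes\sigma_1$ under $(\theta_1,\theta_2)\mapsto (r_1\theta_1+r_2\theta_2)/|r_1\theta_1+r_2\theta_2|$, observe that this pushforward is $O(p)$-invariant, and conclude $\nu_{r_1,r_2}=\sigma_1$ by uniqueness of the invariant measure. The paper packages this via the factorization of $\mathbb{E}[f(\Theta_3)g(R_1,R_2)]$, while you phrase it in terms of the regular conditional distribution, but the substantive argument is identical.
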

\begin{proof} Let $f: S^{p-1} \to \mathbb{R}, g: \mathbb{R}_+ \times \mathbb{R}_+ \to \mathbb{R}$ be bounded Borel functions, then the independence of $N_1,N_2$ yields 
\begin{multline*}
\mathbb{E}\left[f(\Theta_3)g(R_1,R_2)\right] = \mathbb{E}\left[f\left(\frac{N_1+N_2}{|N_1+N_2|}\right)g(|N_1|,|N_2|)\right] \\
= \int_0^{\infty}\int_0^{\infty}F(r_1,r_2)dr_1dr_2\int_{S^{p-1}\times S^{p-1}}f\left(\frac{r_1\theta_1+r_2\theta_2}{|r_1\theta_1+r_2\theta_2|}\right)\sigma_1(d\theta_1)\sigma_1(d\theta_2),
\end{multline*}
where
\begin{equation*}
F(r_1,r_2) := (r_1r_2)^{p-1}e^{-(r_1^2+r_2^2)/2}g(r_1,r_2).
\end{equation*}
Let $\nu_{r_1,r_2}(d\theta)$ be the pushforward of $\sigma_1 \otimes \sigma_1$ under the map 
 \begin{equation*}
 (\theta_1,\theta_2) \mapsto \frac{r_1\theta_1+r_2\theta_2}{|r_1\theta_1+r_2\theta_2|},
 \end{equation*}
then
\begin{equation*}
\int_{S^{p-1}\times S^{p-1}}f\left(\frac{r_1\theta_1+r_2\theta_2}{|r_1\theta_1+r_2\theta_2|}\right)\sigma_1(d\theta_1)\sigma_1(d\theta_2) = \int_{S^{p-1}}f\left(\theta\right)\nu_{r_1,r_2}(d\theta).
 \end{equation*}
But $\nu_{r_1,r_2}$ is obviously invariant under the action of $O(p)$, therefore $\nu_{r_1,r_2} = \sigma_1$ since $\sigma_1$ is the unique distribution on $S^{p-1}$ enjoying the rotational invariance property. 
\end{proof}

We also need the following lemma:
\begin{lem}\label{L2}
Let $V,X,Y$ be random variables such that $Y$ and $(X,V)$ are independent. Then, for any bounded Borel function $f$ 
\begin{equation*}
\mathbb{E}[f(X,Y)| V] = \int \mathbb{E}[f(X,y)|V] \mathcal{D}_Y(dy).
\end{equation*}
\end{lem}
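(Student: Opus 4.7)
The plan is to verify the identity by characterizing the right-hand side as a version of the conditional expectation $\mathbb{E}[f(X,Y)\mid V]$. Concretely, I would show that for every bounded Borel function $\varphi$ on the range of $V$,
\begin{equation*}
\mathbb{E}\Bigl[f(X,Y)\varphi(V)\Bigr] = \mathbb{E}\left[\varphi(V) \int \mathbb{E}[f(X,y)\mid V]\,\mathcal{D}_Y(dy)\right],
\end{equation*}
which, combined with the $\sigma(V)$-measurability of the map $\omega \mapsto \int \mathbb{E}[f(X,y)\mid V](\omega)\,\mathcal{D}_Y(dy)$, identifies it as $\mathbb{E}[f(X,Y)\mid V]$ by the defining property of conditional expectation.

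To establish the displayed identity, I would use the hypothesis that $Y$ is independent of $(X,V)$, so that the joint distribution satisfies $\mathcal{D}_{(X,V,Y)} = \mathcal{D}_{(X,V)} \otimes \mathcal{D}_Y$. Then by Fubini's theorem,
\begin{equation*}
\mathbb{E}\Bigl[f(X,Y)\varphi(V)\Bigr] = \int \mathbb{E}\Bigl[f(X,y)\varphi(V)\Bigr]\,\mathcal{D}_Y(dy) = \int \mathbb{E}\Bigl[\varphi(V)\, \mathbb{E}[f(X,y)\mid V]\Bigr]\,\mathcal{D}_Y(dy),
\end{equation*}
and a second application of Fubini (pulling $\varphi(V)$ and the outer expectation past the $\mathcal{D}_Y$ integral) yields the desired equality.

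The one point that deserves care, and is the main technical obstacle, is to produce a version of the conditional expectation $\mathbb{E}[f(X,y)\mid V]$ that is jointly measurable in $(\omega, y)$, so that Fubini applies and the integral against $\mathcal{D}_Y$ makes sense. The standard route is a monotone class argument: for product functions $f(x,y) = g(x)h(y)$ the identity is immediate, since $\mathbb{E}[g(X)h(Y)\mid V] = h(Y)\cdot \text{(well, no)}$—rather, the independence of $Y$ from $(X,V)$ gives $\mathbb{E}[g(X)h(y)\mid V] = h(y)\,\mathbb{E}[g(X)\mid V]$, which is trivially jointly measurable, and the identity reduces to $\mathbb{E}[h(Y)] \cdot \mathbb{E}[g(X)\mid V]$ on both sides. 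The class of bounded Borel $f$ for which a jointly measurable version exists and the identity holds is closed under bounded monotone limits and linear combinations, hence by the monotone class theorem coincides with all bounded Borel functions on the product space. This then completes the proof.
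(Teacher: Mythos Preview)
Your proposal is correct and follows essentially the same route as the paper: the paper's proof consists of exactly the two sentences ``prove the identity for product functions $f(x,y)=g(x)h(y)$, then extend to all bounded Borel $f$ via the monotone class theorem,'' which is precisely the argument you arrive at after flagging the joint measurability issue. Your additional discussion (verifying the defining property of conditional expectation directly via Fubini and isolating joint measurability as the technical point) is sound and more explicit than the paper, but the core mechanism is identical.
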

\begin{proof} 
This fact is easily proved for bounded functions $f(x,y) = g(x)h(y)$ and then extended to bounded Borel functions using the monotone class Theorem (\cite{Rev1} p.5). 
\end{proof}

Combining the proposition and the lemma, one gets 
 \begin{align*}
 \mathbb{E}\left[e^{i\langle v, N_1+N_2\rangle}|R_1,R_2\right] = \int_{S^{p-1}}\mathbb{E}\left[e^{i\langle v, R_3 s\rangle}|R_1,R_2\right] \sigma_1(ds).
 \end{align*}
Finally, let $\mu_{R_3|(R_1,R_2)}$ be a regular version of the conditional distribution  of $R_3$ given $(R_1,R_2)$, then Fubini Theorem entails  
\begin{equation*}
j_{(p/2)-1}(|v|R_1)  j_{(p/2)-1}(|v|R_2)  = \int_{\mathbb{R}_+}  j_{(p/2)-1}(|v|\xi) \mu_{R_3|(R_1,R_2)}(d\xi).
\end{equation*} 
Thus, \eqref{PF} is proved and $\tau_{x,z}^{(p/2)-1}$ fits $\mu_{R_3|(R_1,R_2)}$ on the event $\{R_1=x,R_2=z\}$ as explained in the following remark.

\begin{remark}
Let $\Phi$ be the angle between $\Theta_1, \Theta_2$: $\cos\Phi = \langle \Theta_1,\Theta_2\rangle$. Then 
 \begin{equation*}
 R_3 = \sqrt{R_1^2 + R_2^2 + 2R_1R_2\cos\Phi}.
 \end{equation*}
But the independence of $\Theta_1, \Theta_2$ entails for any real $w$
\begin{align*}
\mathbb{E}[e^{iw\cos \Phi}] &= \int_{S^{p-1}}\int_{S^{p-1}} e^{iw \langle s,t\rangle} \sigma_1(ds)\sigma_1(dt)
\\& = \int_{S^{p-1}} j_{(p/2)-1}(w|t|)\sigma_1(dt) 
\\& = j_{(p/2)-1}(w) = \frac{\Gamma(p/2)}{\Gamma(1/2)\Gamma((p-1)/2)} \int_{-1}^{1} e^{iw\xi}(1-\xi^2)^{(p-3)/2} d\xi
\end{align*}
where we used Lemma 5.4.4 p.195  in \cite{DX}. Performing the variable change 
\begin{equation*}
u = \sqrt{x^2+z^2 + 2xz\xi}, \quad \xi \in [-1,1], 
\end{equation*}
one recovers the density of $\tau_{x,z}^{(p/2)-1}$ derived in Proposition A.5. p. 1153 in \cite{Rag}. 
\end{remark}

\section{Product formula for Bessel-type hypergeometric functions of one real symmetric matrix argument}
In this section, we consider matrix-variate normal distributions rather than vectors. Doing so leads to a product formula for Bessel-type hypergeometric functions of one real symmetric matrix argument (see below). To this end, we recall from \cite{Chikuse} Ch.I. the following needed facts. Let $p\geq m \geq 1$ and let $N$ be a real matrix-variate $p \times m$ standard normal distribution, that is a $p \times m$ matrix whose entries are independent centered normal distributions with unit variance. Then $N$ admits almost surely a unique polar decomposition $N = Z(N^TN)^{1/2} := ZH$. Moreover, $Z$ and $H$ are independent, $H$ is almost surely invertible and $Z$ is uniformly distributed on the real Stiefel manifold 
\begin{equation*}
\Sigma_{p,m}:=  \{A \in M_{p,m}(\mathbb{R}), A^TA = {\bf I}_m\},
\end{equation*}
where $M_{p,m}(\mathbb{R})$ is the space of $p \times m$ real matrices. Let $O(p)$ be the orthogonal group, then $\Sigma_{p,m}$ is a homogeneous space $\Sigma_{p,m} \approx O(p)/O(p-m)$. It thereby admits a unique $O(p)$-invariant distribution we shall denote $\sigma_{p,m}$. More precisely, $\sigma_{p,m}$ is the pushforward of the Haar distribution on $O(p)$ under the map 
\begin{equation*}
O \mapsto Oe_{p,m}, \quad e_{p,m}:= {\it I}_m \oplus 0_{p-m,m}.
\end{equation*} 
Hence, for any $C \in M_{p,m}(\mathbb{R})$  
\begin{align*}
\mathbb{E}\left[e^{i\textrm{tr}(C^T N)}| \, H \right] &= \int_{\Sigma_{p,m}} e^{i\textrm{tr}(C^T sH)}\sigma_{p,m}(ds) = \int_{\Sigma_{p,m}} e^{i\textrm{tr}(HC^T s)}\sigma_{p,m}(ds).
\end{align*}
Now, let $N_1,N_2$ be two independent $p\times m$ matrix-variate standard normal distributions with corresponding polar decomposition $N_1=Z_1H_1,N_2=Z_2H_2$. Then, by considering the product $\sigma$-field $\sigma(H_1,H_2)$ generated by $H_1, H_2$ we easily derive 
\begin{equation} \label{bes}
\mathbb{E}\left[e^{2i\textrm{tr}(C^T N_1)}|H_1\right] \mathbb{E}\left[e^{2i\textrm{tr}(C^T N_2)}|H_2\right] = \mathbb{E}\left[e^{2i\textrm{tr}(C^T (N_1+N_2))}|H_1,H_2\right].
\end{equation}
Since $N_1+N_2$ is up to a constant factor a $p\times m$ matrix-variate standard normal distribution, then it admits almost surely a polar decomposition $N_1+N_2 = Z_3H_3$, where $Z_3$ is uniformly distributed on $\Sigma_{p,m}$ and is independent from $H_3$. Similarly to the case $m=1$, one proves that $Z_3$ is independent from $\sigma(H_1,H_2)$ (analogue of proposition \ref{L1}) using the following variable change formula (\cite{FK}, Prop. XVI.2.1. p.351): let $dA$ be the Lebesgue measure on $M_{p,m}(\mathbb{R})$, let $S_m^+(\mathbb{R})$ be the set of real positive definite matrices with Lebesgue measure $dr$ and $\gamma=(p/2)-1-[m(m-1)]/2$. Then 
\begin{equation*}
\int_{M_{p,m}(\mathbb{R})} f(A)dA = \int_{\Sigma_{p,m}} \int_{S_m^{+}(\mathbb{R})} f(s\sqrt{r}) [\textrm{det}(r)]^{\gamma} \sigma_{p,m}(ds)dr.
\end{equation*}
Accordingly and with the help of lemma \ref{L2}, one gets
\begin{align*}
 \mathbb{E}\left[e^{2i\textrm{tr}(C^T Z_3H_3)}|H_1,H_2\right] = \int_{\Sigma_{p,m}}\mathbb{E}\left[e^{2i\textrm{tr}(C^T sH_3)}|H_1,H_2\right] \sigma_{p,m}(ds),
\end{align*}
and if $\mu_{H_3|(H_1,H_2)}$ is the conditional distribution of $H_3$ given $(H_1,H_2)$, then Fubini Theorem entails
\begin{multline*}
  \mathbb{E}\left[e^{2i\textrm{tr}(C^T Z_3H_3)}|H_1,H_2\right]  =  \int_{S_m^+(\mathbb{R})}  \left[\int_{\Sigma_{p,m}} e^{2i\textrm{tr}(C^T s\xi)}\sigma_{p,m}(ds)\right]\mu_{H_3|(H_1,H_2)}(d\xi).
\end{multline*}
Using \cite{Herz}, (3.5) p.493, one sees that    
\begin{equation*}
\mathbb{E}\left[e^{2i\textrm{tr}(C^T N)}| H \right] = \int_{\Sigma_{p,m}} e^{2i\textrm{tr}(HC^T s)}\sigma_{d,m}(ds) = {}_0F_1\left(\frac{p}{2}; -(HC^TCH)\right)
\end{equation*}
where ${}_0F_1$ is the Bessel-type hypergeometric function of one real symmetric argument and of geometrical index value $(p/2)$ (it reduces when $m=1$ to $j_{(p/2)-1}$, \cite{Muir}).
Finally, \eqref{bes} yields the product formula
\begin{multline*}
{}_0F_1\left(\frac{p}{2}; -H_1C^TCH_1\right){}_0F_1\left(\frac{p}{2}; -H_2C^TCH_2\right) \\ = \int_{S_m^+(\mathbb{R})}{}_0F_1\left(\frac{p}{2}; -\xi C^TC\xi\right) \mu_{H_3|(H_1,H_2)}(d\xi).
\end{multline*}

Now, we proceed to 
\section{Absolute continuity of $\mu_{H_3|(H_1,H_2)}$ and Product formula for Bessel-type hypergeometric functions of two matrix arguments}
\subsection{Absolute continuity of $\mu_{H_3|(H_1,H_2)}$} 
In contrast to the case $m=1$, the absolute-continuity of $\mu_{H_3|(H_1,H_2)}$ is not obvious and needs a careful analysis we perform below:  
\begin{prop}
For any $p \geq m+1$, $\mu_{H_3|(H_2,H_1)}$ is absolutely continuous with respect to the Lebsegue measure on $S_m(\mathbb{R})$ and its density, say $f_{(H_1,H_2)}(A)$, satisfies: 
\begin{equation}\label{P}
\int_{O(m) \times O(m)} f_{(O_1H_1O_1^T,O_2H_2O_2^T)}(O_3^TAO_3)dO\otimes dO = \int_{O(m) \times O(m)} f_{(O_1H_1O_1^T, O_2H_2O_2^T)}(A)dO \otimes dO
\end{equation}
almost surely for any $O_3 \in O(m)$, where $dO$ is the Haar distribution on $O(m)$. For $p=m$, it is singular. 
\end{prop}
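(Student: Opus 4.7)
The plan is to reduce everything to a question about the pushforward of a single distribution. As derived in Section 3, one has
\[
H_3^2 \;=\; H_1^2 + H_2^2 + H_1 M H_2 + H_2 M^T H_1, \qquad M := Z_1^T Z_2,
\]
with $M$ independent of $(H_1, H_2)$ and distributed as the upper-left $m \times m$ corner of a Haar-distributed orthogonal matrix in $O(p)$: writing $Z_i = O_i e_{p,m}$ with $O_i$ Haar on $O(p)$, one recognises $M = e_{p,m}^T O_1^T O_2 e_{p,m}$ as the corresponding corner of the Haar matrix $O_1^T O_2$. Because $H \mapsto H^2$ is a smooth diffeomorphism of $S_m^+(\mathbb{R})$, absolute continuity of $\mu_{H_3|(H_1,H_2)}$ on $S_m(\mathbb{R})$ is equivalent to absolute continuity of the image of $\mu_M$ under the linear surjection
\[
M \;\longmapsto\; \mathrm{Sym}(h_1 M h_2) \;:=\; h_1 M h_2 + h_2 M^T h_1,
\]
whose kernel has dimension $m(m-1)/2$ (using that $h_1, h_2$ are a.s.\ invertible).

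For $p \geq 2m$ I would invoke the description of $\mu_M$ due to Collins recalled later in the paper: it has density proportional to $\det(I_m - M^T M)^{(p-2m-1)/2}$ on $\{M^T M < I_m\} \subset M_{m,m}(\mathbb{R})$, and integrating over the linear fibres of the surjection then yields the sought density on $S_m(\mathbb{R})$. In the intermediate range $m+1 \leq p \leq 2m-1$, the constraint $\mathrm{rk}(I_m - M^T M) \leq p - m < m$ makes $\mu_M$ Lebesgue-singular on $M_{m,m}(\mathbb{R})$, so that route fails. The plan here is to work one level up on $\Sigma_{p,m} \times \Sigma_{p,m}$, of total dimension $2pm - m(m+1)$, and to show that the smooth map $(Z_1, Z_2) \mapsto \mathrm{Sym}(h_1 Z_1^T Z_2 h_2)$ is a submersion onto $S_m(\mathbb{R})$ at a generic point. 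A derivative computation at a base point $(e_{p,m}, Z_2)$ with $Z_2$ generic, decomposing $\dot Z_1$ into an antisymmetric upper $m \times m$ block $A$ and a free lower $(p-m) \times m$ block $B$, will show that the contributions from $A$, from $B$ (which enters through $B^T K$, where $K$ is the lower block of $Z_2$), and from the $\dot M$-component of $\dot Z_2$ jointly span $S_m(\mathbb{R})$; the dimension inequality $2pm - m(m+1) \geq m(m+1)/2$ that underlies this surjectivity is exactly the hypothesis $p \geq m+1$. The coarea formula then produces an absolutely continuous pushforward. For $p = m$ the argument collapses precisely because $Z_1, Z_2 \in O(m)$ forces $M \in O(m)$, so that the image of $M \mapsto \mathrm{Sym}(h_1 M h_2)$ sits in a submanifold of $S_m(\mathbb{R})$ of dimension at most $m(m-1)/2 < m(m+1)/2 = \dim S_m(\mathbb{R})$, giving singularity.

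Once the density is in hand, the invariance \eqref{P} follows mechanically from the bi-invariance of Haar measure on $O(p)$, which implies in particular $U^T M U \stackrel{d}{=} M$ for every $U \in O(m)$. Plugging this into the displayed identity for $H_3^2$ yields the covariance
\[
f_{(U H_1 U^T,\, U H_2 U^T)}(A) \;=\; f_{(H_1, H_2)}(U^T A U), \qquad U \in O(m),
\]
and the Haar-invariant substitution $O_i \mapsto O_3^T O_i$ in the outer integral of \eqref{P}, followed by this covariance applied with $U = O_3^T$ (so that $O_3^T A O_3$ is sent back to $A$), identifies the two sides. The main obstacle throughout will be the submersion/dimension verification for $m+1 \leq p \leq 2m-1$: one has to show that the Lebesgue-singularity of $\mu_M$ on $M_{m,m}(\mathbb{R})$ is absorbed by the symmetrisation, so that the image measure on $S_m(\mathbb{R})$ is nevertheless smooth.
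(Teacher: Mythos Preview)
Your plan is workable but takes a more laborious route than the paper, and one assertion in it is false.

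The paper does not split cases at $p=2m$ nor invoke Collins's density (that appears only in a later remark). Instead it immediately collapses the pair $(Z_1,Z_2)$ to a \emph{single} Haar variable on $O(p)$: since $Z_i=O_ie_{p,m}$ with $O_i$ Haar on $O(p)$ and $O_1^TO_2$ is again Haar, the measure $\mu_{H_3|(H_1,H_2)}$ is the pushforward of Haar on $O(p)$ under
\[
O\longmapsto \sqrt{H_1^2+H_2^2+H_1 e_{p,m}^T O e_{p,m} H_2+H_2 e_{p,m}^T O^T e_{p,m} H_1},
\]
whose inner part is the restriction to $O(p)$ of an affine (hence Lipschitz) map into $S_m(\mathbb{R})$. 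Federer's area/coarea theorems are then applied with the uniform dimension count $\dim O(p)=p(p-1)/2$ versus $\dim S_m(\mathbb{R})=m(m+1)/2$: equality at $p=m+1$, strict excess for $p\geq m+2$, deficiency for $p=m$. This is sharper and avoids any separate treatment of the range $m+1\leq p\leq 2m-1$, where you correctly note that $\mu_M$ is singular on $M_{m,m}(\mathbb{R})$ and propose to climb back up to $\Sigma_{p,m}\times\Sigma_{p,m}$; the paper simply never descends to $M_{m,m}(\mathbb{R})$ in the first place.

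One concrete error: the inequality $2pm-m(m+1)\geq m(m+1)/2$ is \emph{not} ``exactly the hypothesis $p\geq m+1$''; it rearranges to $p\geq 3(m+1)/4$, which already holds for $p=m$ as soon as $m\geq 3$. So the dimension count on $\Sigma_{p,m}\times\Sigma_{p,m}$ cannot by itself distinguish $p=m$ from $p\geq m+1$; what actually fails at $p=m$ is the rank of the map (because $M\in O(m)$), as you observe. The paper's passage to the single copy of $O(p)$ is precisely what makes the dimension comparison tight. Your derivation of \eqref{P} via the distributional invariance $U^TMU\stackrel{d}{=}M$ and the Haar substitution $O_i\mapsto O_3^TO_i$ is essentially the paper's argument rewritten at the level of $M$ rather than of $\sigma_{p,m}$.
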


\begin{proof} Since
\begin{equation*}
(H_3)^2 = (H_1)^2 + (H_2)^2 + H_1Z_1^TZ_2H_2 + H_2Z_2^TZ_1H_1
\end{equation*}
then $\mu_{H_3|(H_2,H_1)}$ is the pushforward of $\sigma_{p,m} \otimes \sigma_{p,m}$ under the map 
\begin{equation*}
(Z_1,Z_2) \mapsto \sqrt{(H_1)^2 + (H_2)^2 + H_1Z_1^TZ_2H_2 + H_2Z_2^TZ_1H_1}
\end{equation*}
for fixed $H_1,H_2$, where for a positive semi-definite matrix $A$, $\sqrt{A}$ is its square root. But from the very definition of $\sigma_{p,m}$, $\mu_{H_3|(H_1,H_2)}$ is the pushforward of the Haar distribution $dO \otimes dO$ on $O(p) \times O(p)$ under the map  
\begin{equation*}
(O_1,O_2)  \mapsto \sqrt{(H_1)^2 + (H_2)^2 + H_1e_{p,m}^TO_1^TO_2e_{p,m}H_2 + H_2e_{p,m}^TO_2^TO_1e_{p,m}H_1}
\end{equation*}
 or equivalently 
\begin{equation*}
(O_1,O_2) \mapsto \sqrt{(H_1)^2 + (H_2)^2 + H_1e_{p,m}^TO_1O_2e_{p,m}H_2 + H_2e_{p,m}^TO_2^TO_1^Te_{p,m}H_1}
\end{equation*}
since $dO$ is invariant under $O \mapsto O^T$.  Besides, the random variable $O_1O_2 \in O(p)$  is  Haar distributed since it is $O(p)$-invariant. 
As a matter of fact, $\mu_{H_3|(H_1,H_2)}$ is the pushforward of $dO$ under the map 
\begin{equation*}
O \mapsto \sqrt{(H_1)^2 + (H_2)^2+ H_1e_{p,m}^TOe_{p,m}H_2 + H_2e_{p,m}^TO^Te_{p,m}H_1}.
\end{equation*}
Now observe that for fixed $H_1,H_2$, 
\begin{equation*}
O \mapsto (H_1)^2 + (H_2)^2+ H_1e_{p,m}^TOe_{p,m}H_2 + H_2e_{p,m}^TO^Te_{p,m}H_1
\end{equation*}
is a affine map from $O(p)$ into $S_m(\mathbb{R})$, therefore is lipschitzian whose differential map is constant.  Moreover $O(p)$ and $S_m(\mathbb{R})$ are real analytic manifolds such that dim $O(p) = p(p-1)/2$, dim $S_m(\mathbb{R}) = m(m+1)/2$. As a matter of fact 
\begin{itemize}
\item If $p =m+1$, then dim $O(m+1)$ = dim $S_m(\mathbb{R})$ and Theorem 3.2.5 p.244 in \cite{Fed} implies that the pushforward of the Haar distribution on $O(p)$ under this map is absolutely continuous with respect to the Lebesgue measure on $S_m(\mathbb{R})$. 
\item If $p \geq m+2$,  then dim $O(p) >$ dim $S_m(\mathbb{R})$ and Theorem 3.2.12 p. 249 in \cite{Fed} yields the same conclusion.  
\end{itemize}
Now, for any $O_1, O_2, O_3 \in O(m)$,  $f_{(O_1H_1O_1^T,  O_2H_2O_2^T)}(O_3^TAO_3)$ is the density of the random variable (for fixed $H_1,H_2$)
\begin{multline*}
O_3O_1 (H_1)^2O_1^TO_3^T + O_3O_2(H_2)^2O_2^TO_3^T + \\ O_3O_1H_1O_1^TZ_1^TZ_2O_2H_2O_2^TO_3^T + O_3O_2H_2O_2^TZ_2^TZ_1O_1H_1O_1^TO_3^T
\end{multline*}
which can be written as
\begin{multline*}
(O_3O_1) (H_1)^2(O_1^TO_3^T) + (O_3O_2)(H_2)^2(O_2^TO_3^T) + \\ (O_3O_1)H_1(O_1^TO_3^T) (Z_1O_3^T)^T(Z_2O_3^T)(O_3O_2)H_2(O_2^TO_3^T) \\ +  (O_3O_2)H_2(O_2^TO_3^T)(Z_2O_3^T)^T (Z_1O_3^T)(O_3O_1)H_1(O_1^TO_3^T).
\end{multline*}
But since $\sigma_{p,m}$ is invariant under the right action of $O(m)$ (\cite{Chikuse} p.28) and since the Haar distribution $dO$ is $O(m)$-bi-invariant, then the $f_{(H_1, H_2)}$ satisfies \eqref{P}. 
Finally, since dim $O(m) <$ dim $S_m(\mathbb{R})$ then Theorem 3.2.5 in \cite{Fed} shows that for $p=m$, $\mu_{H_3|(H_1,H_2)}$ is singular with respect to the Lebesgue measure on $S_m(\mathbb{R})$.   
\end{proof}

\begin{remark}
Note that 
\begin{equation*}
e_{p,m}^TOe_{p,m} = \Lambda_m \oplus 0_{p-m, p-m}
\end{equation*}
where $\Lambda_m$  is the upper-left $m \times m$ corner of the orthogonal matrix $O$. According to \cite{Col}, Remark 2.1. p.118, if $p \geq 2m$ then the distribution of $\Lambda_m$ is absolutely continuous with respect to the Lebesgue measure on $M_{m,m}(\mathbb{R})$: its density is given by  
\begin{equation*}
\textrm{det}({\bf I}_m - AA^T)^{(p-2m - 1)/2}{\bf 1}_{\{||A|| < 1\}}
\end{equation*} 
where $||\mathopen \cdot \mathclose||$ is the matrix norm induced by the Euclidian norm $|\mathopen \cdot \mathclose|$. This fact should be compared with Lemma 3.7 p.495 in \cite{Herz}. 
\end{remark}

\subsection{Product formula for functions of two matrix arguments} 
Let $p \geq m+1$ so that $\mu_{H_3|(H_2,H_1)}$ is absolutely continuous with respect to Lebesgue measure on $S_m(\mathbb{R})$. Then one derives a product formula for the Bessel-type hypergeometric functions of two real symmetric matrix arguments and of geometrical index values $p/2, p \geq 1$: if $A$ is a real positive semi-definite matrix and $C \in M_{p,m}(\mathbb{R})$, then 
these functions are related to those of one real symmetric matrix argument by
\begin{equation}\label{Rel}
{}_0F_1\left(\frac{p}{2}; A; -C^TC\right) = \int_{O(m)} {}_0F_1\left(\frac{p}{2}; -O\sqrt{A}O^T(C^TC)O\sqrt{A}O^T\right) dO
\end{equation}  
where $dO$ is now the Haar distribution on $O(m)$ (Theorem 7.3.3 p. 260 in \cite{Muir}). 
Keeping the same notations used in the previous section, one has 
\begin{equation*}
{}_0F_1\left(\frac{p}{2}; A; -C^TC\right) = \int_{O(m)}\mathbb{E}\left[e^{2i\textrm{tr}(C^T N)}| H = O\sqrt{A}O^T \right] dO
\end{equation*}
which in turn implies that for any positive semi-definite matrices $A,B$ and any $C \in M_{p,m}(\mathbb{R})$  
\begin{multline*}
{}_0F_1\left(\frac{p}{2}; A; -C^TC\right){}_0F_1\left(\frac{p}{2}; B; -C^TC\right) = \\ \int_{O(m) \times O(m)} \int_{S_m^+(\mathbb{R})} {}_0F_1\left(\frac{p}{2}; -\xi C^TC\xi\right) \mu_{H_3|(O_1\sqrt{A}O_1^T,O_2\sqrt{B}O_2^T)}(d\xi)dO\otimes dO.
\end{multline*}
Recall now that $f_{(H_1,H_2)}$ denote the density of $\mu_{H_3|(H_1,H_2)}$. Then Weyl integration formula for $S_m(\mathbb{R})$ (\cite{Faraut} Theorem 10.1.1. p.232), \eqref{P} and Fubini Theorem entail 
\begin{multline*}
\int_{O(m) \times O(m)} \int_{S_m^+(\mathbb{R})} {}_0F_1\left(\frac{p}{2}; -\xi C^TC\xi\right) f_{(O_1\sqrt{A}O_1^T,O_2\sqrt{B}O_2^T)}(\xi) d\xi \otimes dO \otimes dO = c_m  \int_{O(m) \times O(m)} 
\\ \int_{O(m) \times \mathbb{R}_+^m} {}_0F_1\left(\frac{p}{2}; -ODO^T (C^TC)ODO^T\right) f_{(O_1\sqrt{A}O_1^T,O_2\sqrt{B}O_2^T)}(ODO^T)V(D)dD\otimes dO \otimes dO \otimes dO 
\\ = c_m  \int_{O(m) \times \mathbb{R}_+^m}  {}_0F_1\left(\frac{p}{2}; -ODO^T (C^TC)ODO^T\right) 
\\ \left\{\int_{O(m) \times O(m)}  f_{(O_1\sqrt{A}O_1^T,O_2\sqrt{B}O_2^T)}(D)\,dO \otimes dO\right\} V(D)dD \otimes dO 
\end{multline*}
where $D = \textrm{diag}(\lambda_1 > \lambda_2 > \ldots > \lambda_m )$ is a positive definite diagonal matrix, 
\begin{equation*}
V(D) := \prod_{1 \leq n < j \leq m} (\lambda_n - \lambda_j), \quad dD = \prod_{j=1}^m d\lambda_j,
\end{equation*}
 and $c_m$ is a normalizing constant. By the virtue of \eqref{Rel}, one gets 
\begin{multline*}
{}_0F_1\left(\frac{p}{2}; A; -C^TC\right){}_0F_1\left(\frac{p}{2}; B; - C^TC\right) = \\ c_m\int_{\mathbb{R}_+^m} {}_0F_1\left(\frac{p}{2}; D^2; -C^TC\right) \kappa_{A,B}(D)dD 
\end{multline*}
where  
\begin{equation*}
\kappa_{A,B}(D) := V(D) {\bf 1}_{\{\lambda_1 > \cdots > \lambda_m > 0\}}\int_{O(m) \times O(m)} f_{(O_1\sqrt{A}O_1^T, O_2\sqrt{B}O_2^T)}(D)dO \otimes dO.
\end{equation*}
Finally, one performs a change of variable $\lambda_i \mapsto \sqrt{\lambda_i}, 1 \leq i \leq m$ in order to get the product formula: 
\begin{multline*}
{}_0F_1\left(\frac{p}{2}; A; -C^TC\right){}_0F_1\left(\frac{p}{2}; B; - C^TC\right) = \\ \frac{c_m}{2^m}\int_{\lambda_1 > \cdots > \lambda_m > 0} {}_0F_1\left(\frac{p}{2}; D; -C^TC\right) \frac{\kappa_{A,B}(\sqrt{D})}{\sqrt{\lambda_1\dots\lambda_m}}
\prod_{i=1}^md\lambda_i. 
\end{multline*}

\proof[Acknowledgements]
Authors thank P. Graczyk for fruitful discussions held at Angers university and D. St. P. Richards for valuable comments. This work is supported by Agence Nationale de la Recherche grant ANR-09-BLAN-0084-01.


\begin{thebibliography}{10}
\bibitem{BBO} P. Biane, P. Bougerol, N. O' Connell. \emph{Continuous crystal and Duistermaat-Heckman measure for Coxeter groups}, Adv. Maths  221 (2009), 1522-1583. 
\bibitem{Chikuse} Y. Chikuse. \emph{Statistics on special manifolds}, Lecture Notes in Statistics, 174. Springer-Verlag, New York, 2003.  
\bibitem{CDGRVY} O. Chybiryakov, N. Demni, L. Gallardo, M. R\"osler, M. Voit, M. Yor. \emph{Harmonic and Stochastic Analysis of Dunkl Processes}, Ed. P. Graczyk, M. R\"osler, M. Yor, Collection Travaux en Cours, Hermann, 2008. 
\bibitem{Col} B. Collins. \emph{Int\'egrales Matricielles et Probabilit\'es non Commutatives}. Ph. D. dissertation. University of Paris VI, 2003. 
\bibitem{DX} C. F. Dunkl, Y. Xu. \emph{Orthogonal Polynomials of Several Variables}, Encyclopedia of Mathematics and Its Applications. Cambridge University Press, 2001.
\bibitem{Faraut} J. Faraut. \emph{Analysis on Lie Groups: an Introduction}, Cambridge Studies in Advanced Mathematics, 110. Cambridge University Press, Cambridge,  2008.
\bibitem{FK} J. Faraut, A. Kor\'anyi. \emph{Analysis on Symmetric Cones}, The Clarendon Press, Oxford University Press, New York, 1994.
\bibitem{Fed} H. Federer. \emph{Geometric Measure Theory (reprint of the 1969 edition)}, Classics in Mathematics, Springer, 1996.
\bibitem{Herz} C. S. Herz. \emph{Bessel functions of matrix argument}, Annals of Maths  (61), no. 3 (1955), 474-523. 
\bibitem{Muir} R. J. Muirhead. \emph{Aspects of  Multivariate Statistical Theory}, Wiley Series in Probability and Mathematical Statistics, John Wiley $\&$ Sons, Inc., New York, 1982. 
\bibitem{Rag} D. Ragozin. \emph{Rotation invariant measure algebras on Euclidean space}, Indiana Univ. Math. J. 23, no. 12 (1975), 1139-1154.  
\bibitem{Rev1} D. Revuz. \emph{Markov Chains}, North-Holland Mathematical Library, Vol. 11, 1975.  
\bibitem{Rev} D. Revuz. \emph{Probabilit\'es}, Editeurs des Sciences et des Arts, Hermann, 1997.
\bibitem{Ros} M. R\"osler. \emph{Bessel convolutions on matrix cones}, Compos. Math. 143  (2007), 749-779.
\bibitem{Sato} K. Sato. \emph{L\'evy Processes and Infinitely Divisible Distributions}, Cambridge University Press, Cambridge, 1999. 
\bibitem{Watson} G. N. Watson. \emph{A Treatise on the Theory of Bessel functions}, Cambridge University Press, Cambridge, 1995.
\end{thebibliography}
\end{document}